\documentclass[letterpaper, 10 pt, conference]{ieeeconf}
\IEEEoverridecommandlockouts
\overrideIEEEmargins

\pdfminorversion=4

\usepackage{cite}
\usepackage{amsmath,amssymb,amsfonts}

\usepackage{algpseudocode}
\usepackage{algorithm}

\usepackage{graphicx}
\usepackage{textcomp}
\usepackage{breqn}
\usepackage{comment}
\usepackage{mathtools}
\usepackage{graphics}
\usepackage{bbold}

\usepackage{xcolor}

\newcommand{\tron}{\color{black}}
\newcommand{\troff}{\color{black}}

\hyphenation{op-tical net-works semi-conduc-tor}
\newtheorem{thm}{Theorem}[section]

\newtheorem{assum}[thm]{Assumption}

\usepackage{hyperref}

\begin{document}

\title{State Aggregation for Distributed Value Iteration in Dynamic Programming}

\author{Nikolaus Vertovec
\and
Kostas Margellos\thanks{The paper is accepted in IEEE Control Systems Letters as \emph{N. Vertovec and K. Margellos, "State Aggregation for Distributed Value Iteration in Dynamic Programming,"}, the copyright of the published version is transferred to IEEE, and the published manuscript can be found in \url{https://ieeexplore.ieee.org/document/10149480}. The authors are with the Department of Engineering Science, University of Oxford. Email:\{nikolaus.vertovec, kostas.margellos\}@eng.ox.ac.uk}}

\maketitle

\begin{abstract}
    We propose a distributed algorithm to solve a dynamic programming problem with multiple agents, where each agent has only partial knowledge of the state transition probabilities and costs. We provide consensus proofs for the presented algorithm and derive error bounds of the obtained value function with respect to what is considered as the "true solution" obtained from conventional value iteration. To minimize communication overhead between agents, state costs are aggregated and shared between agents only when the updated costs are expected to influence the solution of other agents significantly. We demonstrate the efficacy of the proposed distributed aggregation method to a large-scale urban traffic routing problem. Individual agents compute the fastest route to a common access point and share local congestion information with other agents allowing for fully distributed routing with minimal communication between agents.
\end{abstract}
\begin{keywords}
    Dynamic programming; Value iteration; Consensus; Distributed algorithms.
\end{keywords}

\IEEEpeerreviewmaketitle
\section{Introduction}
Value iteration is a well-established method for solving dynamic programming problems, yet it exhibits scalability issues for applications with a large state space. To this end, state aggregation can be used to reduce the number of states that need to be considered. The aggregation approach has a long history in scientific computing with applications ranging from the improvement of Galerkin methods \cite{Chatelin1982}, to solving large-scale optimization models \cite{Rogers1991}, and dynamic programming \cite{Bean1987}. In \cite{Tsitsiklis1996} it is shown how the aggregation approach can be used in conjunction with value iteration, however, for problems with large decision spaces and potentially cost and transition probabilities that evolve over time, conventional aggregation will still be hampered by scalability issues. To this end, we propose a multi-agent value iteration algorithm that utilizes aggregation to minimize communication overhead and allows for solving dynamic programming problems where each agent has only partial knowledge of the state transition costs and probability. 

\tron
Value iteration has been expanded to a multi-agent framework for the class of problems that contain a joint decision space \cite{BERTSEKAS2020}, yet in this approach no restrictions are imposed as far as the agents' knowledge of the underlying transition probabilities and costs are concerned. Such restrictions have been considered for small-scale problems as in \cite{Paul2022MultiAgentNR}, however, require a centralized critic for estimating the expected team benefit in a non-cooperative setting.

The proposed framework differs from many multi-agent reinforcement learning approaches that utilize sharing a weighted average of agents' local estimated value. Rather than considering a set of global states shared by all the agents in conjunction with a local reward observed by individual agents as in \cite{Guo2022, Doan2019}, we consider a Markov Decision Process (MDP) partitioned among agents with the objective of determining in a distributed manner the global value function without disclosing information on the transition probabilities and costs among agents.
\troff

This paper performs the following main contributions: (i) We propose a distributed value iteration algorithm for multi-agent dynamic programming, preventing transition probabilities and state costs from constituting global information among agents; (ii) We provide a rigorous mathematical analysis on the convergence and optimality properties of the proposed algorithm, merging tools from multi-agent consensus and dynamic programming principles; and, (iii) We demonstrate our scheme on a non-trivial traffic routing problem. 

The rest of the paper is organized as follows. Section \ref{sec:Problem Setup} provides some mathematical preliminaries and introduces the distributed multi-agent value iteration algorithm. Section \ref{sec:proofs} provides the main statements and associated mathematical analysis supporting the proposed algorithm. In Section \ref{sec:Application} we introduce a traffic routing problem and show how our proposed approach is able to solve the problem in a distributed manner. Finally, Section \ref{sec:Conclusion} provides concluding remarks and directions for future work.

\section{Problem Setup} \label{sec:Problem Setup}
\subsection{Multi-agent Markov Decision Processes}
We first introduce a standard MDP that will be expanded to a multi-agent setting in view of the proposed distributed algorithm presented in the sequel. We consider a finite set of $n$ states denoted by $\mathcal{X}$ and let $g(i,u,j)$ be the cost-to-go from state $i \in \mathcal{X}$ to state $j \in \mathcal{X}$, using the input $u \in \mathcal{U}(i)$, where $\mathcal{U}(i)$ is a finite set of actions/decisions available at state $i$. Furthermore, let $p(i,u,j)$ be the transition probability to transition from state $i$ to state $j$ using the input $u$, with $\sum_{j = 1}^n p(i,u,j) = 1$. The objective is to solve a global discounted infinite horizon dynamic programming problem. The associated Bellman equation is given by $J^*(i) = \min_{u \in U(i)} \sum_{j=1}^n p(i,u,j)[g(i,u,j) + \alpha J^*(j)]$, where $\alpha \in [0,1)$ is a discount factor, and $J^*$ denotes the optimal value function that satisfies the Bellman identity. Solving the Bellman equation using conventional value iteration or policy iteration requires knowledge of all transition probabilities as well as associated costs. 

We consider the setting where a set of $q$ agents collaborate to solve the aforementioned infinite horizon dynamic programming problem in a distributed manner, with only partial knowledge of the state transition probabilities and costs while minimizing communication between agents. As such, we partition the state-space $\mathcal{X}$ into $q$ subsets, $I_{\ell} \subset \mathcal{X}$, such that for all $m, \ell \in \{1, \ldots, q \}$ with $m \neq \ell$, $I_{\ell} \cap I_m = \emptyset$. Each agent knows only the state transition probabilities and cost for transitions originating within its state subset $I_{\ell}$, i.e., for each agent $\ell$, $g_\ell : I_{\ell} \times \mathcal{U} \times \mathcal{X} \rightarrow \mathbb{R}$, $p_\ell : I_{\ell} \times \mathcal{U} \times \mathcal{X} \rightarrow [0,1]$. Using conventional value iteration would require each agent to share its knowledge of the transition probabilities and costs, resulting in significant communication overhead. In the subsequent section, we instead introduce an alternative method relying on state aggregation that allows solving the discounted infinite horizon dynamic programming problem under consideration in a distributed manner where some tentative information is exchanged only with neighboring agents.

\subsection{Distributed Value Iteration}
We start by aggregating the value function (expected optimal cost-to-go) to construct for each agent $\ell=1,\ldots,q$, the aggregate value, defined as $r_{\ell,\ell} = \sum_{i \in I_{\ell}} d_{\ell i} J(i)$, where $J$ constitutes an approximation of the optimal value function of the Bellman equation, and $d_{\ell i}$ is the so-called disaggregation probability (encoding the contribution of each agent's value function to the aggregate one), satisfying for all $\ell=1,\ldots,q$, $\sum_{i \in I_{\ell}} d_{\ell i} = {} 1$ and for all $i \notin I_{\ell}$, $ d_{\ell i} = 0$.
The aggregate values for each agent, $r_{\ell,\ell}$, can be combined into a common vector denoted by $r_{\ell}$; tentative values for this vector will be communicated across all agents. Thus we define $r_{\ell} = \begin{bmatrix} r_{\ell,1}, \ldots, r_{\ell,\ell}, \ldots, r_{\ell,q} \end{bmatrix}^T$.
\tron
In the sequel, we will define an iterative scheme, according to which each agent $\ell = 1, \ldots q$ will update their estimate for the vector $r_{\ell}$; we denote this at iteration $k$, by $r_{\ell}^{k}$,  where its $m$-th element is indicated by $r_{\ell,m}^{k}$, $m = 1, \ldots, q$.\troff

Next, we introduce the aggregation probability satisfying for all $\ell=1,\ldots,q$,
\begin{equation*}
    \phi_{j\ell} = \begin{cases}
    1, & \text{if } j \in I_{\ell}, \\
    0, & \text{otherwise}.
    \end{cases}
\end{equation*}
Such a formulation of the  aggregation probability is known as hard aggregation in the literature \cite[p. 311]{bertsekas2019}.

To solve the discounted infinite horizon dynamic programming problem, we propose a distributed algorithm for which the pseudocode is given in Algorithm \ref{alg:AgentComms} and \ref{alg:AgentUpdate}. \tron The proposed algorithm involves an agent-to-agent communication protocol. At each algorithm iteration $k \geq 0$ we consider the directed communication graph $(V,E(k))$, where the node set $V = \{1,\ldots,q\}$ includes the agents and the set $E(k)$, the directed edges $(m,l)$, indicating that at iteration $k$ agent $\ell$ can receive information from agent $m$. Let $E_B(k) = \bigcup_{i=k}^{(k+B)} E(i)$ for some integer $B$. We make the fairly standard assumption in the existing literature on the communication structure between agents \cite{Nedic2009, Doan2019}:
\begin{assum} \label{ass:Network}
    [Connectivity and Communication] There exists a positive integer $B$ such that $(V,E_B(k))$ is fully connected for each iteration $k$.
\end{assum}
Assumption \ref{ass:Network} implies that for any agent pair $(\ell,m)$ there is a direct link at least once every $B$ iterations. This prevents agents from having to share information with all other agents at all iterations as well as for the presence of a central authority. 
\troff

Initially in Algorithm \ref{alg:AgentComms}, each agent $\ell$, $\ell = 1, \ldots, q$, starts with some tentative value of the aggregate vector $r_{\ell}^{0}$, and local cost function $V_{\ell}^{0}$. The initialization choice is arbitrary.
Utilizing the aggregate vector $r_{\ell}^{k}$ at iteration $k \geq 0$, each agent $\ell$ locally and in parallel executes Algorithm \ref{alg:AgentUpdate} so as to construct the updated local value function $V_{\ell}^{k+1}: I_{\ell} \rightarrow \mathbb{R}$ which will, in turn, yield an updated aggregated cost $r_{\ell,\ell}^{k+1} \in \mathbb{R}$ (Algorithm \ref{alg:AgentComms}, Steps \ref{alg2:line:func}).

\tron
To minimize communication overhead, an updated aggregated cost $r_{\ell,\ell}^{k+1}$ is only shared with other agents in the set $N_\ell(k)$ (the neighbors of $\ell$ at iteration $k$, when the aggregated cost has changed significantly, i.e., $\|r_{\ell,\ell}^{k+1} - r_{\ell,\ell}^{\mathrm{prev}}\| > C_{\mathrm{threshold}}$, where $r_{\ell,\ell}^{\mathrm{prev}}$ is the aggregated cost previously broadcast to other agents and $C_{\mathrm{threshold}}$ is a communication threshold (Algorithm \ref{alg:AgentComms}, Steps \ref{alg2:line:broadcast_start} - \ref{alg2:line:broadcast_end}), or when agent $\ell$ and a neighboring agent $m$ have not communicated in the last $B$ iterations (see Step 11; this ensures that there exists a bounded intercommunication time as required by Assumption \ref{ass:Network}). For further discussion on the communication threshold to limit the transmission of insignificant data, we refer to \cite{Gatsis22}.
\troff

Until convergence is reached, each agent will repeat the execution of Algorithm \ref{alg:AgentUpdate} and the subsequent communication step. Note that at iteration $k$, the vector $r_{\ell}^{k}$ may contain aggregate values from other agents received at prior time steps (Algorithm \ref{alg:AgentComms}, Step \ref{alg2:line:oldValues}). It will be shown in the proof of Theorem \ref{thm:consensus} that convergence will nevertheless be reached.

We now turn to the update of $V_{\ell}^{k}$ and $r_{\ell,\ell}^{k}$, performed when calling Algorithm \ref{alg:AgentUpdate}. Each agent uses the local value function, $V_{\ell}^{k}$, to represent the cost-to-go function at each state $i \in I_{\ell}$, and the aggregated cost, $r_{\ell}^{k}$, to approximate the cost for states $j \in \mathcal{X} \backslash I_{\ell}$. The local value function is iteratively updated for each state $i \in I_{\ell}$ (Algorithm \ref{alg:AgentUpdate}, Steps \ref{alg1:line:V_update_start} - \ref{alg1:line:V_update_end}). This update follows from standard value iteration; choosing the control action at state $i$ that minimizes the expected cost-to-go (Algorithm \ref{alg:AgentUpdate}, Step \ref{alg2:line:bellmanoperator}) and using a Gauss-Seidel update on the local value function to improve convergence and minimize memory usage (Algorithm \ref{alg:AgentUpdate}, Step \ref{alg2:line:gauss-seidel}). We perform an update only to the local value function since we restrict ourselves to states for which sufficient knowledge of the transition probabilities and costs is available. After the local value function has been updated, the updated local aggregated cost, $r_{\ell,\ell}^{k+1}$, is computed (Algorithm \ref{alg:AgentUpdate}, Step \ref{alg1:line:r_l}).

Notice that Algorithms \ref{alg:AgentComms} and \ref{alg:AgentUpdate} prevent disclosing the transition probabilities to all agents; in contrast, each agent $\ell$, has access only to the probabilities and costs associated with transitions originating from their partition $I_{\ell}$, $\ell=1,\ldots,q$.

\tron
\begin{algorithm}
\caption{Distributed value iteration}
\begin{algorithmic}[1]
    \tron
    \State \textbf{Initialization}
    \State Set $r_{\ell}^{0} \leftarrow [0, \ldots, 0]$ for all $\ell = {1, \ldots, q}$
    \State Set $r_{\ell,\ell}^{\mathrm{prev}} \leftarrow 0$ for all $\ell = {1, \ldots, q}$
    \State Set $V_{\ell}^{0}(i) \leftarrow 0$ for all $i \in I_{\ell}$ and $\ell = {1, \ldots, q}$
    \State $k = 0$
\Repeat \label{alg2:line:repeat}
\For{each Agent $\ell \in {1, \ldots, q}$} 
    \State $V_{\ell}^{k+1}, r_{\ell,\ell}^{k+1} \leftarrow $Agent-Update($V_{\ell}^{k}, r_{\ell}^{k}$) \label{alg2:line:func}
    \State $N_\ell(k) = \{m \in \{1,\ldots,q\} :~ (\ell,m) \in E(k) \}$
    \If{$\|r_{\ell,\ell}^{k+1} - r_{\ell,\ell}^{\mathrm{prev}}\| > C_{\mathrm{threshold}}$, \\ \hspace{1.6cm}\textbf{or if} $(\ell,m) \notin \bigcup_{i=k-B+1}^k E(i)$} \label{alg2:line:broadcast_start}
        \State Send $r_{\ell,\ell}^{k+1}$ to all agents $m \in N_\ell(k)$ 
    \EndIf \label{alg2:line:broadcast_end}
    \If{receiving information from an agent $m$}
        \State $r_{\ell,m}^{k+1} \leftarrow r_{m,m}^{k+1}$
    \Else
        \State $r_{\ell,m}^{k+1} \leftarrow r_{\ell,m}^{k}$ \label{alg2:line:oldValues}
    \EndIf
\EndFor 
\Until{$\|r_{\ell}^{k+1} - r_{\ell}^{k}\|\leq \mathrm{tolerence}$ for all $\ell = {1, \ldots, q}$}
\end{algorithmic}
\label{alg:AgentComms}
\end{algorithm}
\troff

\begin{algorithm}
\caption{Agent-Update}
\begin{algorithmic}[1]
\Function{Agent-Update}{$V_{\ell}^{k}, r_{\ell}^{k}$}
    \For{$i \in I_{\ell}$} \label{alg1:line:V_update_start}
        \State \hspace{-0.3cm} $V_{\ell}^{k+1}(i) \leftarrow \min_{u \in U(i)} \sum_{j=1}^n p_\ell(i,u,j)[g_\ell(i,u,j)$ \\
        \hspace{3cm} $+ \alpha (\phi_{j\ell} V_{\ell}^{k}(j) + \sum_{\substack{m=1 \\ m\neq \ell}}^q \phi_{jm} r_{\ell,m}^{k})]$ \label{alg2:line:bellmanoperator}
        \State \hspace{-0.3cm} $V_{\ell}^{k}(i) \leftarrow V_{\ell}^{k+1}(i)$ \label{alg2:line:gauss-seidel}
    \EndFor \label{alg1:line:V_update_end}
    \State $r_{\ell,\ell}^{k+1} \leftarrow \sum_{j \in I_{\ell}} d_{\ell j} V_{\ell}^{k+1}(j)$ \label{alg1:line:r_l}
    \State \textbf{return} $V_{\ell}^{k+1}, r_{\ell,\ell}^{k+1}$
\EndFunction
\end{algorithmic}
\label{alg:AgentUpdate}
\end{algorithm}

\section{Algorithm analysis} \label{sec:proofs}
The following theorem is the main result of this section.
\begin{thm} Consider Assumption \ref{ass:Network}.
Algorithm \ref{alg:AgentComms} converges asymptotically to a common value for $r$ among agents, i.e., there exists $\bar{r}$ such that for all $\ell=1,\ldots,q$, $\lim_{k \to \infty} \|r_{\ell}^k - \bar{r}\| = 0$. 
Moreover, for all $\ell=1,\ldots,q$, for all $i \in \mathcal{X}$, $V_{\ell}^k(i)$ converges to some $\overline{V}_\ell^*(i)$.
\label{thm:consensus}
\end{thm}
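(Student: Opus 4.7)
The plan is to recast Algorithms \ref{alg:AgentComms}--\ref{alg:AgentUpdate} as an asynchronous fixed-point iteration on a finite-dimensional joint state, driven by an operator whose contractivity comes entirely from the discount factor $\alpha \in [0,1)$, and then invoke the classical convergence theory for contraction mappings under bounded-delay, totally asynchronous updates \cite[Ch.~3]{bertsekas2019}. Assumption~\ref{ass:Network}, reinforced by the forced broadcast in Step 11 of Algorithm~\ref{alg:AgentComms}, will provide the bounded inter-communication delay required by that theorem.

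\textbf{Step 1 (synchronous contraction).} I would stack the unknowns into $z = (V_1,\dots,V_q;\, r_{1,1},\dots,r_{q,q})$ and define a synchronous operator $F$ that updates $z$ exactly as in Algorithm~\ref{alg:AgentUpdate}, assuming each agent sees the current $r_{m,m}$ for every $m$ (no delays). For the $V$-block, the update at state $i \in I_\ell$ is a pointwise minimum over $u$ of affine expressions in $V_\ell$ and $\{r_{\ell,m}\}_{m \neq \ell}$ with nonnegative coefficients $\alpha\, p_\ell(i,u,j)\phi_{j\ell}$ and $\alpha\, p_\ell(i,u,j)\phi_{jm}$; since the hard-aggregation indicators partition $\mathcal{X}$ and $p_\ell(i,u,\cdot)$ is a probability, these coefficients sum to exactly $\alpha$, so bounding the difference of two minima by the maximum pointwise difference would yield $\|F(z) - F(z')\|_\infty \le \alpha\|z-z'\|_\infty$ on this block. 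For the $r_{\ell,\ell}$-block, $r_{\ell,\ell}^{k+1} = \sum_{i\in I_\ell} d_{\ell i} V_\ell^{k+1}(i)$ is a convex combination of just-updated $V_\ell$ values, hence also bounded by $\alpha\|z-z'\|_\infty$. Banach's theorem then produces a unique fixed point $z^\star = (\overline V_1^*,\dots,\overline V_q^*;\, \bar r_1,\dots,\bar r_q)$, and the in-place Gauss-Seidel sweep (Step 5 of Algorithm~\ref{alg:AgentUpdate}) is harmless because a componentwise relaxation of a sup-norm contraction inherits the same modulus.

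\textbf{Step 2 (asynchronous convergence and consensus).} Under Assumption~\ref{ass:Network} and the Step-11 guard, at every iteration $k$ agent $\ell$'s update uses $r_{\ell,m}^k = r_{m,m}^{k-\tau_m^\ell(k)}$ for some delay $\tau_m^\ell(k) \le B$, and every local block $(V_\ell, r_{\ell,\ell})$ is refreshed at every iteration, so no component is starved. These are precisely the hypotheses of the standard totally asynchronous convergence theorem for sup-norm contractions, yielding $z^k \to z^\star$, i.e.\ $V_\ell^k(i)\to \overline V_\ell^*(i)$ for every $\ell$ and $i$, and $r_{\ell,\ell}^k \to \bar r_\ell$. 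Because $r_{\ell,m}^k$ is a bounded-delay copy of $r_{m,m}^k$, we also obtain $r_{\ell,m}^k \to \bar r_m$, so with $\bar r = (\bar r_1,\dots,\bar r_q)^T$ we conclude $\|r_\ell^k - \bar r\|\to 0$ for every $\ell$. The main obstacle is twofold: (a) bounding the difference of two Bellman minima \emph{jointly} in $(V, r)$ so as to secure the $\alpha$-contraction (rather than mere non-expansiveness) across the whole state $z$, and (b) verifying cleanly that the event-triggered, delayed, Gauss-Seidel schedule indeed fits the asynchronous convergence framework; once these are pinned down, the remainder is bookkeeping.
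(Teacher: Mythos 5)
Your argument is correct in outline, but it takes a genuinely different route from the paper. The paper never passes through a fixed-point/Banach argument: it works directly with the increments $\delta_\ell^{k}(i)=V_\ell^{k+1}(i)-V_\ell^{k}(i)$ and $\epsilon_{\ell,m}^{k}=r_{\ell,m}^{k+1}-r_{\ell,m}^{k}$, shows via the same structural fact you identify (the coefficients of the unknowns in the Bellman update sum to exactly $\alpha$ because $p_\ell(i,u,\cdot)$ is a probability and $\sum_m\phi_{jm}=1$) that each increment is bounded by $\alpha$ times the previous ones, and then absorbs the bounded communication delays by summing increments over a sliding window of $B+1$ iterations, obtaining $\max\{\overline{\delta}_\infty^{k},\epsilon_\infty^{k}\}\le\alpha\max\{\overline{\delta}_\infty^{k-1},\epsilon_\infty^{k-1}\}$; geometric decay of the increments gives a Cauchy sequence and hence convergence, with no external theorem invoked. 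Your recasting as a totally asynchronous iteration of a sup-norm $\alpha$-contraction is sound --- the joint operator is indeed an $\alpha$-contraction for exactly the reason you give, the Gauss--Seidel sweep and the bounded delays from Assumption \ref{ass:Network} plus the forced broadcast fit the standard framework, and the two obstacles you flag are precisely the computations the paper carries out by hand. What your route buys is stronger conclusions: uniqueness of the limit, independence from initialization, and an explicit characterization of $(\overline{V}_\ell^*,\bar r)$ as the fixed point of the aggregated Bellman operator, which is in fact what the proof of Theorem \ref{thm:errorBounds} later needs. What the paper's route buys is self-containedness and an explicit handle on the rate (the window construction makes the dependence on $B$ visible). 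One caveat common to both: the event-triggered threshold $C_{\mathrm{threshold}}$ can suppress transmissions indefinitely when the edge is present but changes stay small, so both your bounded-delay claim and the paper's ``without loss of generality'' reduction really analyze the always-transmit variant; you should state that reduction explicitly as the paper does.
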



\begin{proof}
We show convergence when agents transmit $r_{\ell,\ell}^k$, $\ell=1,\ldots,q$, irrespective of whether this deviates more than $C_{\mathrm{threshold}}$ from their previously transmitted cost.
This is without loss of generality as in this case the resulting sequence of transmitted costs would form a subsequence of $\{r_{\ell,\ell}^{k}\}_{k=0}^\infty$, hence it will be convergent as we will show that $\{r_{\ell,\ell}^{k}\}_{k=0}^\infty$ converges.

\tron
Fix any $k \geq B$, and any $\ell \in \{1,\ldots,q\}$. Let $\epsilon_{\ell,m}^{k} = r_{\ell,m}^{k+1} - r_{\ell,m}^{k}$,
$\epsilon_{\ell,\infty}^{k} = \max_{m = 1, \ldots, q} |r_{\ell,m}^{k+1} - r_{\ell,m}^{k}|,$ and
    $\epsilon_{\infty}^{k} = \max_{\ell = 1,\ldots,q} \sum_{n = k-B}^k \epsilon_{\ell,\infty}^{n}$,
i.e., $\epsilon_{\infty}^{k}$ is the maximum among agents of the cumulative incremental update over the most recent $B+1$ iterations; recall that due to Assumption \ref{ass:Network} this is the window within which agent $\ell$ communicates with other agents at least once. \troff
  
  Consider the update to the value $V_{\ell}^{k}(i)$, for an arbitrary iteration $k$ and agent $\ell=1,\ldots,q$, which we define as 
\begin{align*}
    &\delta_\ell^{k}(i) = V_{\ell}^{k+1}(i) - V_{\ell}^{k}(i) 
    = \min_{u \in U(i)} \sum_{j=1}^n p_\ell(i,u,j)\Big(g_\ell(i,u,j) \\ & \qquad  + \alpha [\phi_{j\ell} V_{\ell}^{k}(j) + \sum_{\substack{m=1 \\ m\neq \ell}}^q \phi_{jm} r_{\ell,m}^{k}]\Big) - V_{\ell}^{k}(i)\\
    %
    %
    & = \min_{u \in U(i)}\Big( \sum_{j=1}^n p_\ell(i,u,j)\Big(g_\ell(i,u,j) + \alpha [\phi_{j\ell} V_{\ell}^{k-1}(j) \\ & \qquad + \sum_{\substack{m=1 \\ m\neq \ell}}^q \phi_{jm} r_{\ell,m}^{k-1}]\Big) + \alpha\sum_{j=1}^n p_\ell(i,u,j)\Big(\phi_{j\ell}\delta_\ell^{k-1}(j) \\ & \qquad  +\sum_{\substack{m=1 \\ m\neq \ell}}^q \phi_{jm} \epsilon_{\ell,m}^{k-1} \Big)\Big) - V_{\ell}^{k}(i) \\
    & \leq V_{\ell}^{k}(i) + \max_{u \in U(i)} \alpha\sum_{j=1}^n p_\ell(i,u,j)\Big(\phi_{j\ell}\delta_\ell^{k-1}(j) \\ & \quad +\sum_{\substack{m=1 \\ m\neq \ell}}^q \phi_{jm} \epsilon_{\ell,m}^{k-1} \Big) - V_{\ell}^{k}(i)\\
    & = \max_{u \in U(i)} \alpha\sum_{j=1}^n p_\ell(i,u,j)\Big(\phi_{j\ell}\delta_\ell^{k-1}(j) +\sum_{\substack{m=1 \\ m\neq \ell}}^q \phi_{jm} \epsilon_{\ell,m}^{k-1} \Big),
\end{align*}
where the first equality follows from the definition of $V_{\ell}^k$, and the second one from a rearrangement after substituting $V_{\ell}^{k}(j) = V_{\ell}^{k-1}(j) + \delta_\ell^{k-1}(j)$ and $r_{\ell,m}^{k} = r_{\ell,m}^{k-1} + \epsilon_{\ell,m}^{k-1}$. The inequality follows from the definition of $V_{\ell}^k$ and by considering the maximum over $u\in U(i)$.

For each $\ell=1, \ldots, q$, we now define the maximum update of $|\delta_\ell^{k}|$, over all states $i \in I_{\ell}$, as
$\overline{\delta}_\ell^{k} = \max_{i \in I_{\ell}} |\delta_\ell^{k}(i)|$.
It then follows that for all $i\in I_{\ell}$,
\begin{align}
    & \delta_\ell^{k}(i) 
    %
    \leq \max_{u \in U(i)} \alpha\sum_{j=1}^n p_\ell(i,u,j)\Big(\phi_{j\ell}\overline{\delta}_\ell^{k-1} +\sum_{\substack{m=1 \\ m\neq \ell}}^q \phi_{jm} \epsilon_{\ell,\infty}^{k-1} \Big) \nonumber \\
    & \leq \max_{u \in U(i)} \alpha \sum_{j=1}^n p_\ell(i,u,j)\Big(\sum_{m=1}^q \phi_{jm} \max\{\overline{\delta}_\ell^{k-1},\epsilon_{\ell,\infty}^{k-1}\} \Big) \nonumber \\
    & = \alpha\max\{\overline{\delta}_\ell^{k-1},\epsilon_{\ell,\infty}^{k-1}\},\label{eq:delta_bound}
\end{align}  
where the first inequality follows from the definition of $\overline{\delta}_\ell^{k}$, and the equality follows from the fact that $\sum_{j=1}^n p(i,u,j) = 1$, and $\sum_{m=1}^q \phi_{jm} = 1$.  \tron
Define $\overline{\delta}_{\infty}^{k} \coloneqq \max_{\ell = 1, \ldots, q} \sum_{n = k-B}^k \overline{\delta}_\ell^{n}$, resulting in
\begin{align}
    \overline{\delta}_{\infty}^{k} & \leq  \alpha \max_{\ell = 1, \ldots, q} \sum_{n = k-B}^k
 \max\{\overline{\delta}_\ell^{k-1},\epsilon_{\ell,\infty}^{k-1}\} \nonumber \\ 
 & = \alpha  
 \max\{\overline{\delta}_{\infty}^{k-1},\epsilon_{\infty}^{k-1}\},
 \label{eq:delta_inf}
\end{align}
where the inequality is since \eqref{eq:delta_bound} holds for all $i \in I_{\ell}$, and the exchange of the summation and the maximization operator is since all quantities are non-negative.
\tron
At the same time, 
\begin{align}
  &\sum_{n = k-B}^k \max_{m = 1, \ldots, q} |r_{\ell,m}^{n+1} - r_{\ell,m}^{n}|\nonumber \\
    & \leq \sum_{n = k-B}^k \max_{m = 1, \ldots, q}  |r_{m,m}^{n+1} - r_{m,m}^{n}|\nonumber \\
    & =  \sum_{n = k-B}^k \max_{m = 1, \ldots, q}|\sum_{i \in I_m} d_{mi} (V_m^{n}(i) + \delta_m^{n}(i)) - r_{m,m}^{n}| \nonumber\\
    %
    & = \sum_{n = k-B}^k \max_{m = 1, \ldots, q}  |\sum_{i \in I_m} d_{mi}\delta_m^{n}(i)| \nonumber \\
    &= \sum_{n = k-B}^k \max_{m = 1, \ldots, q} \overline{\delta}_m^n
    = \max_{m = 1, \ldots, q} \sum_{n = k-B}^k  \overline{\delta}_m^n = \overline{\delta}_{\infty}^k, 
    \label{eq:eps_ell_inf}
\end{align}
where the inequality is since $\sum_{n = k-B}^k |r_{\ell,m}^{n+1} - r_{\ell,m}^{n}|$ is bounded by the value this would become if agents $\ell$ and $m$ communicated at every iteration $n = k-B+1,\ldots,k$ (see Steps 14-18, Algorithm \ref{alg:AgentComms}). The
first equality follows from the fact that $r_{m,m}^{n+1} = \sum_{i \in I_m} d_{mi} V_m^{n+1}(i)$ (see Step \ref{alg1:line:r_l}, Algorithm \ref{alg:AgentUpdate}), and since $V_m^{n+1}(i) = V_m^{n}(i) + \delta_m^{n}(i)$, while the last one is due to the definition of $\overline{\delta}_{\infty}^k$.

By \eqref{eq:eps_ell_inf} we can upper-bound $\epsilon_{\infty}^k = \max_{\ell=1,\ldots,q}\sum_{n = k-B}^k \max_{m = 1, \ldots, q} |r_{\ell,m}^{n+1} - r_{\ell,m}^{n}|$ as 
\begin{align}
    \epsilon_{\infty}^k & \leq \max_{\ell=1,\ldots,q} \overline{\delta}_{\infty}^k \leq \alpha  
 \max\{\overline{\delta}_{\infty}^{k-1},\epsilon_{\infty}^{k-1}\},
    \label{eq:eps_inf}
\end{align}
where the last inequality follows from \eqref{eq:delta_inf}, and the fact that the bound in \eqref{eq:delta_inf} is independent of $\ell$. \troff

By \eqref{eq:delta_inf} and \eqref{eq:eps_inf} we then have that $\max\{\overline{\delta}_\infty^{k}, \epsilon_\infty^{k}\} \leq \alpha \max\{\overline{\delta}_\infty^{k-1}, \epsilon_\infty^{k-1}\}$, which implies that $\max\{\overline{\delta}_\infty^{k}, \epsilon_\infty^{k}\}$ is contractive. As a result, $\{\max\{\overline{\delta}_\infty^{k}, \epsilon_\infty^{k}\}\}_{k\geq 0}$, and hence also $\{\epsilon_\infty^{k}\}_{k\geq0}$ and $\{\overline{\delta}_{\infty}^k\}_{k\geq 0}$ will be converging to zero. Since $\{\epsilon_\infty^{k}\}_{k\geq0}$ converges, then each term in the summation in the definition of $\epsilon_\infty^{k}$ will also converge, implying that $\{|r_{\ell,m}^{k+1} -r_{\ell,m}^{k}|\}_{k\geq 0}$ is convergent. This implies that for all $\ell=1,\ldots,q$ there exists $\bar{r}$ such that $\lim_{k \to \infty} \|r_{\ell}^k - \bar{r}\| = 0$, while convergence of $\{\overline{\delta}_{\infty}^k\}_{k\geq 0}$ (due to \eqref{eq:delta_inf}, and the definition of $\delta_\ell^k(i)$) implies that for all $\ell=1,\ldots,q$, for all $i\in \mathcal{X}$, $\{V_{\ell}^k(i)\}_{k\geq 0}$ would be convergent, thus concluding the proof.
\end{proof}
Theorem \ref{thm:consensus} implies consensus among agents to a common $\bar{r}$, and also establishes convergence of $\{V_{\ell}^k(i)\}_{k \geq 0}$ to some $V_{\ell}^*(i)$. \tron Moreover, the convergence rate is linear since $\max\{\overline{\delta}_\infty^{k}, \epsilon_\infty^{k}\}$ is shown to be contractive in the proof of Theorem \ref{thm:consensus}. The exact convergence rate for $\{|r_{\ell,m}^{k+1} -r_{\ell,m}^{k}|\}_{k\geq 0}$ will depend on $B$. \troff

Next, we consider error bounds between the limiting $V_{\ell}^*(i)$ and the optimal $J^*(i)$, satisfying the Bellman equation. For the subsequent result we assume that $B=0$, i.e., agents communicate with all other agents at all iterations.

\begin{thm}
Consider Assumption \ref{ass:Network} with $B=0$ and Algorithm \ref{alg:AgentComms}. For all $\ell=1,\ldots,q$, $i \in \mathcal{X}$, we have that the limit point $V_{\ell}^*(i)$ of $\{V_{\ell}^k(i)\}_{k \geq 0}$ satisfies
\begin{equation}
    |V_{\ell}^*(i) - J^*(i)| \leq \alpha \frac{\delta}{1-\alpha}, \label{eq:errorBounds}
\end{equation}
where $\delta  \coloneqq \max_{\ell \in \{1, \ldots, q\}}\max_{i,j \in I_{\ell}} |J^*(i) - J^*(j)|$ and $J^*$ is the solution of the Bellman equation.
\label{thm:errorBounds}
\end{thm}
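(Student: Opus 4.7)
The plan is to pass Algorithm~\ref{alg:AgentComms} to its fixed point so that $V_\ell^*$ obeys an aggregated Bellman equation, then compare this equation to the true Bellman equation for $J^*$ via the standard difference-of-minima trick, and finally absorb the aggregation error into the within-cluster variation $\delta$ using a disaggregation-probability decomposition. The first step exploits the $B=0$ assumption, under which Assumption~\ref{ass:Network} makes the communication graph fully connected at every iteration. Combined with Theorem~\ref{thm:consensus}, this means every agent has the same limiting $r$-vector $\bar r$, and passing $k\to\infty$ in Steps~\ref{alg2:line:bellmanoperator}--\ref{alg1:line:r_l} yields, for each $i\in I_\ell$, a fixed-point equation of the form
\begin{equation*}
V_\ell^*(i)=\min_{u\in U(i)}\sum_{j=1}^{n} p(i,u,j)\bigl[g(i,u,j)+\alpha\,\tilde J_\ell(j)\bigr],
\end{equation*}
where $\tilde J_\ell(j)=V_\ell^*(j)$ for $j\in I_\ell$ and $\tilde J_\ell(j)=\bar r_m=\sum_{i\in I_m}d_{mi}V_m^*(i)$ for $j\in I_m$ with $m\neq\ell$.

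Next, I would set $e\coloneqq \max_{\ell}\max_{i\in I_\ell}|V_\ell^*(i)-J^*(i)|$ and apply the usual trick of evaluating each minimization at the other's minimizer to obtain $|V_\ell^*(i)-J^*(i)|\leq\alpha\max_{u\in U(i)}\sum_{j}p(i,u,j)\,|\tilde J_\ell(j)-J^*(j)|$. The summand is at most $e$ when $j\in I_\ell$; when $j\in I_m$ with $m\neq\ell$ I would add and subtract $\sum_{i\in I_m}d_{mi}J^*(i)$ to get
\begin{equation*}
|\bar r_m-J^*(j)|\leq\sum_{i\in I_m}d_{mi}|V_m^*(i)-J^*(i)|+\sum_{i\in I_m}d_{mi}|J^*(i)-J^*(j)|\leq e+\delta,
\end{equation*}
where the last inequality uses that $d_{m\cdot}$ is a probability measure on $I_m$ together with the fact that $j\in I_m$. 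Taking the overall maximum and using $\sum_j p(i,u,j)=1$ yields $e\leq \alpha(e+\delta)$, which rearranges to \eqref{eq:errorBounds}.

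The main obstacle is the decomposition in the second step: the naive sup-norm estimate of $|\bar r_m-J^*(j)|$ can be as large as the range of $J^*$, which is much larger than $\delta$. Inserting $\sum_{i\in I_m}d_{mi}J^*(i)$ is the key move because it converts the aggregation gap into an average of within-cluster differences of $J^*$, each controlled by $\delta$ since both the support of $d_{m\cdot}$ and the point $j$ lie in the same partition $I_m$. Once this decomposition is in place, the rest is a direct adaptation of the classical discounted dynamic programming contraction argument.
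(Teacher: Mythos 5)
Your proof is correct, but it takes a genuinely different route from the paper's. The paper constructs an explicit super-solution $\overline{V}_\ell(i) = J^*(i)+\alpha\frac{\delta}{1-\alpha}$, verifies that the aggregated Bellman operator satisfies $(T\overline{V}_\ell)(i)\le\overline{V}_\ell(i)$, and then uses monotonicity plus contraction of $T$ to conclude $V_\ell^*(i)\le\overline{V}_\ell(i)$, with the lower bound obtained by a symmetric argument. You instead work directly at the fixed point: you define the sup-norm error $e=\max_\ell\max_{i\in I_\ell}|V_\ell^*(i)-J^*(i)|$ and derive the self-bounding inequality $e\le\alpha(e+\delta)$ via the difference-of-minima estimate. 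The crucial ingredient is the same in both arguments, namely converting the aggregation gap into within-cluster variation of $J^*$: your insertion of $\sum_{i\in I_m}d_{mi}J^*(i)$ plays exactly the role of the paper's bound $\overline{r}_{\ell,\ell}\le\min_{i\in I_\ell}J^*(i)+\frac{\delta}{1-\alpha}$ in \eqref{eq:r_bar}, and both exploit that $d_{m\cdot}$ is supported on $I_m$ and $\sum_{m}\phi_{jm}=1$. Your version buys both sides of the absolute-value bound in a single pass and dispenses with the monotone-sequence machinery; the paper's version additionally exhibits a non-increasing sequence of iterates started from $\overline{V}_\ell$, which is closer in spirit to the aggregation analysis of \cite{Tsitsiklis1996}. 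Both arguments rest on the same unstated-but-needed fact, which you should make explicit when you ``pass the algorithm to its fixed point'': by Theorem \ref{thm:consensus} the limits $V_\ell^*$ and $\bar r$ exist, and by continuity of the update map (including through the Gauss--Seidel sweep, whose fixed points coincide with those of the Jacobi form) they jointly satisfy the aggregated Bellman equation you write down, with $\bar r_m=\sum_{i\in I_m}d_{mi}V_m^*(i)$ thanks to $B=0$.
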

\begin{proof}
The proof is inspired by \cite{Tsitsiklis1996}. Fix $\ell=1,\ldots,q$ and $i\in \mathcal{X}$. We only show that $V_{\ell}^*(i) \leq J^*(i) + \alpha \frac{\delta}{1-\alpha}$, as the other side in \eqref{eq:errorBounds} follows symmetric arguments.
To this end, for each $i \in I_{\ell}$, define
$\overline{V}_\ell(i) \coloneqq J^*(i) + \alpha \frac{\delta}{1-\alpha}$.
It follows then that the aggregate of $\overline{V}_\ell(i)$ can be constructed as
\begin{align}
    \overline{r}_{\ell,\ell} & \coloneqq \sum_{j \in I_{\ell}} d_{\ell j}\overline{V}_\ell(j) = \sum_{j \in I_{\ell}} d_{\ell j} (J^*(j) + \alpha \frac{\delta}{1-\alpha} ) \nonumber \\
    & \leq
    \sum_{j \in I_{\ell}} d_{\ell j}(\min_{i \in I_{\ell}} J^*(i) + \delta + \alpha \frac{\delta}{1-\alpha} ) \nonumber \\
    & = \min_{i \in I_{\ell}} J^*(i) + \delta + \alpha \frac{\delta}{1-\alpha} = \min_{i \in I_{\ell}} J^*(i) + \frac{\delta}{1-\alpha}, \label{eq:r_bar}
\end{align}
where the inequality is since $J^*(j) \leq \min_{i \in I_{\ell}} J^*(i) + \delta$, while the second last equality is due to $\sum_{j \in I_{\ell}} d_{\ell j} = 1$.

Denote the Bellman operator induced by the Bellman equation as $T$, such that we can compactly write the Bellman equation as $J^*(i) = (TJ^*)(i)$, where effectively by $(TJ^*)(i)$ we imply the right-hand side of the Bellman equation which depends on $i$ and on $J^*(j)$ for all $j=1,\ldots,n$.
For all $i=1,\ldots,n$, we now consider the application of the Bellman operator to $\overline{V}_\ell$, i.e.,
\begin{align}
    & (T \overline{V}_\ell)(i) = \min_{u \in U(i)} \sum_{j=1}^n p_\ell(i,u,j)\Big(g_\ell(i,u,j) +\alpha [\phi_{j\ell} \overline{V}_\ell(j) \nonumber \\ & \qquad 
    + \sum_{\substack{m=1 \\ m\neq \ell}}^q \phi_{jm} \overline{r}_{\ell,m}]\Big) \nonumber \\
    & \leq \min_{u \in U(i)} \sum_{j=1}^n p_\ell(i,u,j)\Big(g_\ell(i,u,j) + \alpha [\phi_{j\ell}  (J^*(j) \nonumber \\ & \qquad 
    + \alpha\frac{\delta}{1-\alpha}) + \sum_{\substack{m=1 \\ m\neq \ell}}^q \phi_{jm} (\min_{j \in I_m} J^*(j) + \frac{\delta}{1-\alpha})]\Big) \nonumber \\
    %
    %
    & \leq \min_{u \in U(i)} \sum_{j=1}^n p_\ell(i,u,j)\Big(g_\ell(i,u,j) \nonumber \\
    &\qquad+ \alpha \sum_{m=1}^q \phi_{jm}(J^*(j) + \frac{\delta}{1-\alpha})\Big) \nonumber \\
    & \leq \min_{u \in U(i)} \sum_{j=1}^n p_\ell(i,u,j)\Big(g_\ell(i,u,j) + \alpha J^*(j)\Big) \nonumber \\
    & \qquad + \alpha \frac{\delta}{1-\alpha}
    = J^*(i) + \alpha \frac{\delta}{1-\alpha}
      = \overline{V}_\ell(i), \label{eq:V_mon}
\end{align}
where the first inequality is due to the definition of $\overline{V}_\ell$ and utilizes $\overline{r}_{\ell,m} = \overline{r}_{m,m}$, which holds since $B=0$. The second inequality follows, since $\min_{j \in I_m} J^*(j) \leq J^*(j)$ for all $j \in \mathcal{X}$, and $\alpha\frac{\delta}{1-\alpha} \leq \frac{\delta}{1-\alpha}$, that in turn allows us to combine the terms multiplied with $\phi_{j\ell}$ into the last summation. The last inequality follows from $\sum_{m=1}^q \phi_{jm} =1$, while the second last equality is due to the Bellman equation.

By \eqref{eq:V_mon} we have that $(T \overline{V}_\ell)(i) \leq \overline{V}_\ell(i)$, for all $i=1,\ldots,n$, which implies that $\{\overline{V}_\ell^k(i)\}_{k \geq 0}$ is a non-increasing sequence. Moreover, $T$ is contractive and as such it will converge to its (unique) fixed point; a direct consequence of Theorem \ref{thm:consensus} is that this fixed point is $V_{\ell}^*(i)$ (as the latter was constructed by successive applications of the Bellman operator). Therefore, for all $i=1,\ldots,n$, $V_{\ell}^*(i) = \lim_{k \to \infty} (T^k \overline{V}_\ell)(i) \leq \overline{V}_\ell(i)$, thus establishing $V_{\ell}(i) \leq J^*(i) + \alpha \frac{\delta}{1-\alpha}$, concluding the proof.
\end{proof}

It follows from Theorem \ref{thm:errorBounds}, that if the aggregation sets $I_{1}, \ldots, I_{q}$ are chosen such that the cost function $J^*$ is expected to vary moderately between states within an aggregation set, then the maximum error compared to $V_{1}, \ldots, V_{q}$ will be moderate as well. One method to aggregate states with similar costs is to use feature-based aggregation, whereby the aggregation is performed on a set of representative features instead of representative states. This form of aggregation has been thoroughly investigated; we refer to \cite[pg. 322]{bertsekas2019}, \cite[pg. 68]{bertsekas1996} and references therein. For applications with a high discount factor, i.e., $\alpha << 1$, the maximum error will also be moderate.

\section{Application to traffic routing} \label{sec:Application}
\subsection{Simulation Set-up}
We demonstrate the proposed algorithm in a traffic routing case study. To this end, we begin by modeling the traffic network as a graph, with vertices representing junctions and edges representing roads connecting junctions. An illustration of such a graph representation for the Oxford road network is shown in Figure \ref{fig:oxford_graph}.

\begin{figure}
 	\centering
	\includegraphics[width=0.45\textwidth]{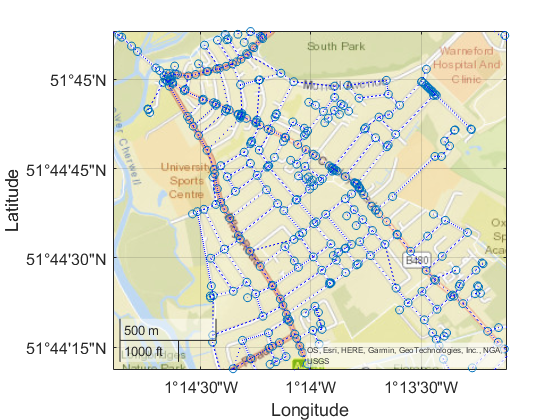}
	\caption{Graph representation of the East-Oxford road network. Junctions are represented by vertices and edges represent roads.}
	\label{fig:oxford_graph}
\end{figure}

\tron
We consider a low-energy antenna with a limited range and computation power to be situated at the center of each network partition, gathering the current speed and location of nearby vehicles. The transition cost of an edge is the expected travel time along that edge and is computed based on the data received by cars, i.e.,
\begin{equation}    
    g_\ell(i,u,j) = \frac{\text{length of edge}}{\text{average speed of cars on edge}}, 
    \label{eqn:cost-to-go}
\end{equation}
where $u$ is used to determine the selected edge between node $i$ and $j$. Due to the nature of the low-energy application, we utilize the proposed algorithm to limit the amount of data that needs to be sent over longer distances.



As is common in transit node routing, vehicles are trying to reach a common access node, such as a freeway used for long-distance routing. The goal is to find the fastest path to the access point, thus the cost at each vertex is the discounted expected travel time to the access point. In our example related to the Oxford traffic network, the access point will be London Road, leading long-distance travelers toward London. 
\troff

\subsection{Simulation results}
\tron The Oxford road network is partitioned into 5 subgraphs using K-means clustering of the vertices based on their euclidean distance, as shown in Figure \ref{fig:clustering}. \troff

\begin{figure}
 	\centering
	\includegraphics[width=0.45\textwidth]{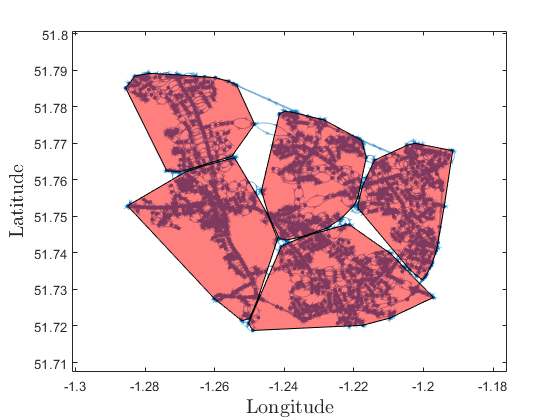}
	\caption{The complete Oxford road network is subdivided into 5 subgraphs representing the aggregation sets.}
	\label{fig:clustering}
\end{figure}

The cost-to-go is calculated as in \eqref{eqn:cost-to-go}, where the average car speed is randomly generated from a uniform distribution to lie between $25\%$ and $100\%$ of the speed limit of the relevant road. For each vertex, the outgoing edges are enumerated and the transition probability between two vertices is set to either 1 or 0, depending on whether an edge directly connects the vertices and the input selects that edge.

The disaggregation probabilities are zero for all nodes that have no edge connected to a vertex in another aggregation set $I_{\ell}$. The remaining vertices are given a normalized non-zero disaggregation probability. The discount factor,  $\alpha = 0.9$, is chosen close enough to 1 so as to reflect the desire to reach the final node and avoid loops, yet smaller than 1, so as to weight costs further off as less important due to the constantly evolving traffic situation.

\tron Solving the aggregate problem and comparing it to what is considered to be the "true solution" $J^*$ obtained via conventional value iteration, we notice that the normalized average error of the expected cost-to-go, i.e., $\frac{1}{n}\sum_{\ell = 1, \ldots, q} \sum_{i \in I_{\ell}}\frac{|\phi_{il} V_{\ell}(i) - J^*(i)|}{|J^*(i)|}$, is $0.94\%$, and the normalized maximum error i.e., $\max_{\ell = 1, \ldots, q} \max_{i \in I_{\ell}} \frac{|\phi_{il} V_{\ell}(i) - J^*(i)|}{|J^*(i)|}$, is $190.83\%$.

The value function is shown in Figure \ref{fig:cost}. The evolution over time of the aggregated costs is shown in Figure \ref{fig:communication}, where the colored dots represent when the change to an agent's local value function compared to the last broadcast is greater than the chosen communication threshold ($0.1$ in this example). \troff At this point, the agent will broadcast its updated aggregated cost. If since the last broadcast, no agents value function has changed significantly, the agents signal each other that convergence is reached and the algorithm terminates.
\tron
For comparison, we show how, on average, the normalized error increases as we increase the number of agents (see Table below). This is to be expected, as agents will rely more heavily on the aggregated values the smaller their respective partitions become.
    \begin{table}[h]
    \tron
    \begin{tabular}{l|llll}
    Number of Agents         & 4         & 8         & 12        & 16        \\ \hline
    Normalized average error & $0.67\%$ & $1.63\%$ & $2.84\%$ & $4.46\%$
    \end{tabular}
    \label{table}
    \end{table}
\troff
\begin{figure}
\tron 
 	\centering
	\includegraphics[width=0.45\textwidth]{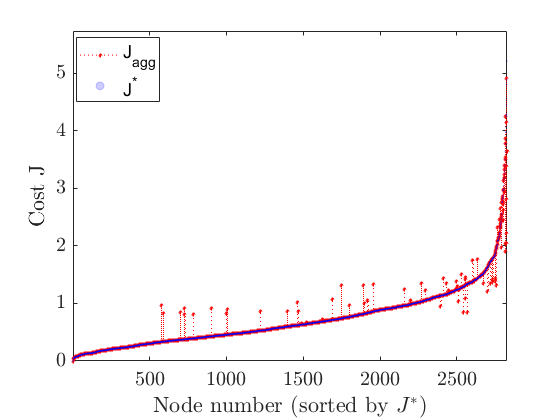}
	\caption{The node cost sorted by size. The blue line indicates the "true cost" computed using conventional value iteration, the red markers represent the cost obtained by the proposed distributed algorithm.}
	\label{fig:cost}
\troff
\end{figure}
\begin{figure}
\tron
 	\centering
	\includegraphics[width=0.45\textwidth]{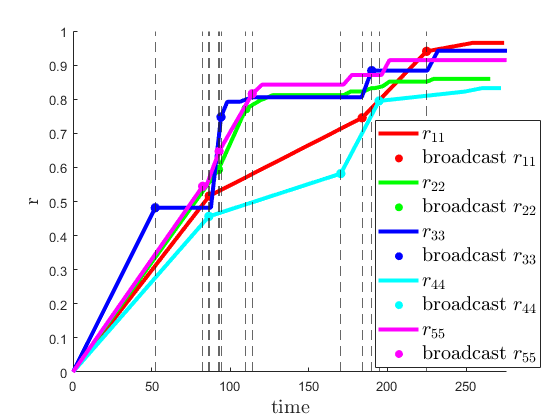}
	\caption{The aggregated cost $r$ evolving over time with minimal communication between agents.}
	\label{fig:communication}
\troff
\end{figure}

To reproduce the numerical results the associated code has been made available in \cite{VertovecCode2022}, with the ability to upload any OpenStreetMap file, which is then converted to a graph and subsequently a discounted Markov decision process.

\section{Conclusion}\label{sec:Conclusion}
We presented a multi-agent extension of aggregated value iteration which was shown to be able to solve large-scale dynamic programming problems in a fully distributed manner. The presented methodology finds application in problems where each agent has only partial knowledge of the transition probabilities and costs. To this end, we demonstrated its efficacy in a distributed traffic routing problem, for which the code has been made available in \cite{VertovecCode2022}. 

Future work aims at extending Theorem \ref{thm:errorBounds} that is based on the additional assumption of full network connectivity at all iterations to the more general case of Assumption \ref{ass:Network}, as well as to the case of hard aggregation (where each state is clearly assigned to no more than one agent). 

\section*{Acknowledgments}
The authors would like to acknowledge Dr. Konstantinos Gatsis for several insightful discussions.

\bibliographystyle{./bibtex/IEEEtran}
\bibliography{./bibtex/IEEEabrv,./bibtex/IEEEexample,./bibtex/bibliography}

\end{document}